\newcommand{\cc}{\chi}
\title{A Lie-Rinehart algebra with no antipode}
 \author{Ulrich Kr\"ahmer\footnote{ulrich.kraehmer@glasgow.ac.uk}  \ and Ana Rovi\footnote{a.rovi.1@research.gla.ac.uk} \\
School of Mathematics and Statistics\\ 
University of Glasgow\\  
Glasgow G12 8QW, UK}
\numberwithin{equation}{section}
\newtheorem{theorem}{Theorem}[section]
\newtheorem{proposition}[theorem]{Proposition}
\newtheorem{definition}[theorem]{Definition}
\begin{document}

\setcounter{secnumdepth}{3}
\setcounter{tocdepth}{3}

\maketitle

\begin{abstract}

The aim of this note is to communicate a simple example of a Lie-Rinehart algebra whose enveloping algebra is not a Hopf algebroid in the sense of B\"ohm and Szlach\'anyi.

\end{abstract}

\singlespacing


\section{Introduction}

The enveloping algebra of a Lie algebra is a classical 
example of a  Hopf algebra. Hence it is 
natural to ask whether the enveloping algebra of a Lie algebroid 
\cite{Pradines}
or more generally of a Lie-Rinehart algebra \cite{RinehartForms} 
carries the  structure of a Hopf algebroid.  It turns out that they  
always are  \emph{left bialgebroids} (introduced under the name  
$\times_R$-bialgebras by Takeuchi \cite{takeuchi}),  
see \cite{xu}, and in fact \emph{left Hopf algebroids} 
(introduced under the name $\times_R$-Hopf algebras by 
Schauenburg \cite{Schauenburg}), see  \cite[Example 2]{KOKR}; 
see also \cite{huebschmannhopf,iekeenveloping}. 

However, the question whether these left Hopf algebroids are  
\emph{full Hopf  algebroids} in the sense of \cite{Gabriella,JHLu} 
remained open. In the light of \cite[Proposition~3.11]{NielsPosthuma}, 
this is known to be true for Lie algebroids \cite{WeinsteinTransverse} and 
for the Lie-Rinehart algebras associated to Poisson
algebras   \cite[Section (3.2)]{HuebschmannPaper1}. A
counterexample was announced by Kowalzig and the first
author, see  \cite[Remark~3.12]{NielsPosthuma}, but the
construction contained a gap.    To our knowledge, the
literature still contains no   example of a left Hopf
algebroid that is not a full one. Hence the aim of the
present note is to communicate such an example:

\begin{theorem}\label{main}
Let $K$ be a field, $R:= K  [ x, y  ] / \langle x
\cdot y , x^2 , y^2 \rangle$,  $L$ be the 1-dimensional
Lie algebra with basis  $ \{\alpha \}$ and
$E \in \mathrm{Der}_K(R)$ be the derivation  with $E(x)=y,E(y)=0$. 
\begin{enumerate}
\item There is a Lie-Rinehart algebra structure on  $(R,L)$ with $R$-module structure on $L $ given by $x \cdot \alpha= y \cdot \alpha = 0$    and  anchor map given by $ \rho (\alpha)=E$.
\item There is no right $V(R,L)$-module structure on $R$ that extends right multiplication in $R$. In  particular, $V(R,L)$ is not a full Hopf algebroid. 
\end{enumerate}
\end{theorem}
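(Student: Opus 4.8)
The plan is to reduce everything to a single incompatibility inside $V(R,L)$: the relation $\alpha x=y$ cannot coexist with a right action on $R$ that restricts to right multiplication, because right multiplication by $x$ lands in $Kx$ while $y\notin Kx$.

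For part~(1) I would simply verify the Lie--Rinehart axioms. As $L=K\alpha$ is one--dimensional its bracket vanishes, so the content is that $E$ is a well--defined derivation of $R$ and that $\rho$ is $R$--linear and bracket--preserving. Well--definedness of $E$ follows from $E(x^2)=2xy$, $E(xy)=y^2$, $E(y^2)=0$, all lying in $\langle xy,x^2,y^2\rangle$. Since $x$ and $y$ annihilate $\alpha$, the module $L$ is pulled back along $R\to R/\langle x,y\rangle\cong K$, and $R$--linearity of $\rho$ reduces to $xE=yE=0$ in $\mathrm{Der}_K(R)$, which holds because $(xE)(x)=xy=0$, $(yE)(x)=y^2=0$ and both derivations vanish on $y$. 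Finally the Leibniz identity $[\alpha,r\alpha]=\rho(\alpha)(r)\,\alpha$ becomes $0=E(r)\alpha$, which is true since $E(r)\in Ky$ and $y\alpha=0$.

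For part~(2) I would first read off the defining relations of $V(R,L)$. The two families of relations specialise, for the generators $x,y$ of $R$, to $\iota_R(x)\iota_L(\alpha)=\iota_L(x\alpha)=0$ and $\iota_R(y)\iota_L(\alpha)=0$, and to $\iota_L(\alpha)\iota_R(x)-\iota_R(x)\iota_L(\alpha)=\iota_R(E(x))=\iota_R(y)$ together with $\iota_L(\alpha)\iota_R(y)-\iota_R(y)\iota_L(\alpha)=0$. Writing the images of $x,y,\alpha$ simply as $x,y,\alpha$, this reads
\[
\alpha x=y,\qquad \alpha y=0,\qquad x\alpha=y\alpha=0
\]
in $V(R,L)$ (the full decomposition $V(R,L)=R\oplus\bigoplus_{n\ge 1}K\alpha^n$ as a left $R$--module can be verified but is not needed below). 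Now suppose $\triangleleft\colon R\otimes V(R,L)\to R$ were a right module structure with $r\triangleleft\iota_R(s)=rs$ for all $r,s\in R$. Evaluating $1\triangleleft(\alpha x)$ in two ways yields, using the relation $\alpha x=y$, the value $1\triangleleft y=y$ on the one hand, and $(1\triangleleft\alpha)\triangleleft x=(1\triangleleft\alpha)\,x$ on the other. The latter lies in $Rx=Kx$, whereas $y\notin Kx$; this contradiction rules out any such $\triangleleft$ and proves the first assertion of~(2).

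The one step that is not a routine computation is the implication ``no right module structure $\Rightarrow$ not a full Hopf algebroid''. Here I would invoke the B\"ohm--Szlach\'anyi axioms: a full Hopf algebroid carries, alongside the left bialgebroid $V(R,L)$, a right bialgebroid on the same total algebra, and its right counit $\pi_R$ makes the base a right module via $r\triangleleft a=\pi_R(\iota_R(r)a)$; since $\pi_R\circ\iota_R=\mathrm{id}$ and $\iota_R$ is an algebra map, this action satisfies $r\triangleleft\iota_R(s)=rs$, i.e.\ it extends right multiplication. Thus a full Hopf algebroid structure would supply exactly the module excluded above, and part~(2) follows by contraposition. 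I expect the only real care to be needed in matching the source and target maps of the two bialgebroid structures so that ``extends right multiplication in $R$'' is indeed the correct property to obstruct; once that is pinned down, the contradiction of the previous paragraph finishes the argument.
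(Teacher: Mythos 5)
Your proposal is correct and follows essentially the same route as the paper: the key step is identical, namely acting with the relation $\bar\alpha x=y$ on $1\in R$ to force $y=(1\triangleleft\bar\alpha)\cdot x\in Rx=Kx$, which is the paper's equation $y=\partial(\alpha)\cdot x$ specialised from its general identity for character-type module structures. The only cosmetic differences are that you verify part (1) directly instead of via the paper's Proposition on characters, and you sketch the derivation of the right $V(R,L)$-action on the base from the B\"ohm--Szlach\'anyi axioms rather than citing Kowalzig--Posthuma as the paper does.
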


The note is structured as follows: in Section \ref{section: background} we recall some basic definitions. In Section \ref{MainThm}  we  provide a construction method of Lie-Rinehart algebras whose enveloping algebras do not admit an antipode. The simplest example of these is the one in  our theorem.   Lastly, Section \ref{antipode} illustrates the result  by giving an explicit presentation of $V(R,L)$ for our example in which the  nonexistence of an antipode becomes evident.\\   

U.K. acknowledges support by the EPSRC grant ``Hopf algebroids and Operads''; 
A.R. is funded by an EPSRC DTA grant and thanks Jos\'e Figueroa O'Farrill for 
his encouragement, and Gwyn Bellamy for remarks about differential  operators.

\section{Background}
\label{section: background}

This section contains background on Lie-Rinehart  algebras \cite{RinehartForms}, 
see also
\cite{HuebschmannPaper1,NielsThesis,NielsPosthuma,iekeenveloping} for more information. For the corresponding differential geometric notion of a Lie algebroid see  \cite{Pradines} and for example \cite{Mackenzie} for further details.

We fix a field $K$. An unadorned $ \otimes $
denotes the tensor product of $K$-vector spaces.
\begin{definition}
\label{def LR}

A Lie-Rinehart algebra consists of  
\begin{enumerate} 
\item a commutative $K$-algebra $(R, \cdot )$,
\item a Lie algebra $( L , [-,-]_L ) $ over  $K$,
\item a left $R$-module structure  $R \otimes L \rightarrow L$, $r \otimes \xi \mapsto  	r \cdot \xi$, $r \in R, \xi \in L$, and
\item an  $R$-linear Lie algebra homomorphism  $ \rho: L \rightarrow \mathrm{Der}_K ( R )$ satisfying  
\begin{equation}
  \label{condition1}
	[ \xi, r \cdot \zeta ]_L = r \cdot [ \xi , \zeta ]_L +  	\rho ( \xi ) ( r ) \cdot \zeta,  	\quad r \in R, \xi, \zeta \in L.
\end{equation}
\end{enumerate}        %
The map $ \rho $ is referred to as the anchor map.
\end{definition}

There are two fundamental examples: if $R$ is any
commutative algebra, one can take $L$ to be 
$\mathrm{Der}_K(R)$ with its usual Lie algebra and
$R$-module structure, and $ \rho =  \mathrm{id} $. 
The other extreme is $R=K$ and $ \rho = 0$, $L$ being
any Lie algebra. 

In his paper \cite{RinehartForms}, Rinehart generalised   
the construction of the universal enveloping algebra of a Lie algebra to Lie-Rinehart algebras, see Section~2   therein for the precise construction.   The result is an associative $K$-algebra $ V(R,L)$ that is generated by the (sum of the) images of a $K$-algebra map  
$$
	R \longrightarrow V ( R, L )
$$
and a Lie algebra map 
$$
	( L, [-,-]_L ) \longrightarrow ( V ( R,L ), [-,-]
),\quad
	\xi \longmapsto \bar \xi
$$
where $[-,-]$ denotes the commutator in $ V ( R,L )$.  
As Rinehart, we do not distinguish between an element in 
$R$ and its image in $V(R,L)$ which is justified as the
first map is always injective. The construction 
is such  that in $V(R,L)$ one has for all 
$r \in R,\xi \in L$
\begin{equation}\label{xr}
	[\bar\xi,r]=\rho(\xi)(r),\quad r \bar\xi
=\overline{r \cdot \xi},
\end{equation}
where the product in $V(R,L)$ is denoted by concatenation.

As indicated in the introduction, $V(R,L)$ has the
structure of a left Hopf algebroid. Its counit endows
$R$ with the structure of a left $V(R,L)$-module, in
such a way that the induced action of $r \in R$ is
given by  left multiplication, and the induced action
of $ \xi \in L$ is given by the anchor map. For a full
Hopf  algebroid, composing the counit with the antipode
yields also a right $V(R,L)$-module structure on the
base algebra $R$ extending right multiplication in $R$,
see \cite[Proposition~3.11]{NielsPosthuma} for full
details. Thus the nonexistence of such a right module
structure on the base algebra $R$ indeed implies the nonexistence of an antipode.

\section{Proof of Theorem~\ref{main}}
\label{MainThm}

We now prove Theorem~\ref{main}. We begin by  considering more generally    Lie-Rinehart algebras $(R,L)$   whose $R$-module structure on $L$ is given by a  
character $ \cc : R \rightarrow K$.  
\begin{proposition}\label{final}

Let $(R,\cdot)$ be a commutative  $K$-algebra, $(L,[-,-]_L)$ be a Lie algebra and $ \rho : L  \rightarrow \mathrm{Der}_K(R)$ be a Lie algebra map. Define an $R$-module structure on $L$ by  $r \cdot \xi := \cc(r)\xi$, where $\cc: R \rightarrow K$ is a character on $R$. Then $(R,L)$ is a Lie-Rinehart algebra if and only if $ \rho $ is $R$-linear and $\rho(\xi)(r) \in \mathrm{ker}\, \cc$ for all $r \in R,\xi \in L$.
\end{proposition}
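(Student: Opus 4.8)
The plan is to unwind Definition~\ref{def LR} for the specific module structure $r\cdot\xi=\cc(r)\xi$ and to determine which of its clauses hold automatically and which reduce to the two conditions in the statement.

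First I would check that $r\cdot\xi:=\cc(r)\xi$ really defines a left $R$-module structure on $L$, so that clause~(3) holds with no extra hypothesis: because $\cc$ is a character we have $\cc(1)=1$ and $\cc(rs)=\cc(r)\cc(s)$, which yield $1\cdot\xi=\xi$ and $(rs)\cdot\xi=r\cdot(s\cdot\xi)$, while bilinearity follows from the $K$-linearity of $\cc$ and of the scalar action on $L$. Clauses~(1) and~(2) are among the hypotheses, and the part of clause~(4) asking that $\rho$ be a Lie algebra homomorphism is assumed as well. Hence $(R,L)$ is a Lie-Rinehart algebra exactly when the two remaining demands of clause~(4) are met: that $\rho$ is $R$-linear, and that the Leibniz identity~\eqref{condition1} holds. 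The first of these is already one of the two conditions in the statement and carries over verbatim, so it only remains to show that~\eqref{condition1} is equivalent to $\rho(\xi)(r)\in\ker\cc$.

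To establish this equivalence I would substitute the module structure into both sides of~\eqref{condition1}. Since $\cc(r)\in K$ and $[-,-]_L$ is $K$-bilinear, the left-hand side equals $[\xi,\cc(r)\zeta]_L=\cc(r)[\xi,\zeta]_L$, and the right-hand side equals $\cc(r)[\xi,\zeta]_L+\cc(\rho(\xi)(r))\zeta$; the common term cancels, so~\eqref{condition1} is equivalent to the requirement that $\cc(\rho(\xi)(r))\,\zeta=0$ for all $r\in R$ and $\xi,\zeta\in L$. The only mildly delicate point, and the closest thing to an obstacle, is passing between this vanishing and the pointwise statement $\rho(\xi)(r)\in\ker\cc$: when $L\neq 0$ I would pick a nonzero $\zeta$ to force the scalar $\cc(\rho(\xi)(r))$ to vanish, and conversely the kernel condition plainly gives $\cc(\rho(\xi)(r))\zeta=0$; in the degenerate case $L=0$ both sides of the biconditional are vacuously true. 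Combining these reductions yields the claimed characterisation.
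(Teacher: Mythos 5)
Your proposal is correct and follows essentially the same route as the paper: substitute the module structure $r\cdot\xi=\cc(r)\xi$ into the Leibniz rule~\eqref{condition1}, use $K$-bilinearity of the bracket to cancel the common term $\cc(r)[\xi,\zeta]_L$, and observe that what remains is equivalent to $\rho(\xi)(r)\in\ker\cc$. The paper's proof is just a terser version of this computation; your extra checks (that $\cc$ being a character makes $L$ a genuine $R$-module, and the $L=0$ degenerate case) are harmless elaborations of points the paper leaves implicit.
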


\begin{proof}
This follows as the Leibniz rule (\ref{condition1}) takes the form
$$
	[\xi,\cc(r)\zeta]_L = 	\cc(r)[\xi,\zeta]_L+\cc(\rho(\xi)(r))	\zeta
$$
and hence by the $K$-linearity of the bracket becomes  equivalent to $ \rho(\xi)(r) \in \mathrm{ker}\cc$.
\end{proof}

Note that for these examples, $[-.-]_L$ is even
$R$-linear, so $L$ is a Lie algebra over $R$. However,
in general we have $ \rho \neq 0$.  

Assume now that $(R,L)$ is a Lie-Rinehart algebra as in the above 
proposition, and that right multiplication in $R$ can be  extended to a 
right $V(R,L)$-module structure on $R$.  Denote by 
$\partial(\xi) \in R$ the element obtained by acting with 
$\xi \in L$ on $1 \in R$ under this right action.  This defines 
a $K$-linear map $\partial : L  \rightarrow R$, and  
in $V(R,L)$ we have 
$$
	\rho(\xi)(r)=[\bar\xi,r]=\bar\xi r-r \bar\xi=\bar\xi
r - \overline {r \cdot \xi}= 	\bar\xi r -\cc(r)\bar\xi,
$$  
so by acting with this element on $1 \in R$, one sees that 
this map $\partial$ satisfies
\begin{equation}\label{huebsch}
	\rho (\xi)(r)=\partial(\xi) \cdot (r - \cc(r)).
\end{equation}
A $K$-linear map $ \partial $ with  this property defines a right 
$V(R,L)$-module structure  extending multiplication on $R$ if and only if 
it  satisfies the condition 
$\partial([\xi,\zeta]_L)=\rho(\xi)(\partial(\zeta))-\rho(\zeta)
(\partial(\xi))$. It also corresponds to a generator of the Gerstenhaber bracket on $\Lambda_R L$, see \cite{HuebschmannPaper1},  but we shall not need these facts:
\begin{proof}[Proof of Theorem~\ref{main}]

The first part is verified by explicit computation;  the Lie-Rinehart algebra is of the form as in Proposition~\ref{final} with $\cc$ given by  $\cc(x)=\cc(y)=0$.  

 For 2., take $r=x$ and $\xi=\alpha$ in (\ref{huebsch}). One obtains $y=E(x)=\rho(\alpha)(x)=\partial(\alpha)x$. However,  there is no element $z \in R$ such that $y=z \cdot x$.
\end{proof}

\section{A Hopf algebroid without antipode}
\label{antipode}
Carrying out Rinehart's construction explicitly yields 
a  presentation of the associative $K$-algebra $V(R,L)$ in terms of  
generators $x,y,\bar\alpha$ satisfying the relations 
$$
	\bar\alpha x=y,\quad
	\bar\alpha y=x \bar\alpha = y \bar\alpha = x^2=y^2=xy=yx=0.
$$
Hence $V(R,L)$ has a $K$-linear basis given by 
$\{\bar\alpha^n,x,y\}_{n \in \mathbb{N}}$.
   
In view of Axiom (iii) in
\cite[Definition~4.1]{Gabriella}, the antipode $S$ of
any Hopf algebroid $H$ over $R$ 
satisfies $S(t(r))=s(r)$  where $s,t : R \rightarrow
V(R,L)$ are the source and  
the target map of the underlying left bialgebroid, respectively. 
For the left bialgebroid $V(R,L)$, these are both the 
inclusion of $R$ into $V(R,L)$, 
hence an antipode on $V(R,L)$ would satsify  
$S(x)=x$, $S(y)=y$.

However, the antipode of a Hopf algebroid is an algebra antihomomorphism, 
$S(gh)=S(h)S(g)$ for all $g,h \in H$, see 
e.g.~\cite[Proposition~4.4.1]{Gabriella}. So in
$V(R,L)$,
one would have 
$$
	y=S(y)=S(\bar\alpha x)=S(x)S(\bar\alpha)=x
S(\bar\alpha).
$$
This illustrates directly that $V(R,L)$ admits no
antipode, since there is
no element $z \in V(R,L)$ such that $y=xz$.

 \renewcommand{\bibname}{References}
\bibliographystyle{alpha}

\end{document}